\newtheorem{theorem}{Theorem}
\newtheorem{lemma}[theorem]{Lemma}
\newtheorem{corollary}[theorem]{Corollary}
\newtheorem{question}{Question}
\newtheorem{definition}[theorem]{Definition}
\title{Cardinal invariants of cellular-Lindel\"of spaces}
\author{Angelo Bella}
\address{
Department of Mathematics and Computer Science \\
University of Catania \\
Citt\'a universitaria\\  
viale A. Doria 6 \\
95125 Catania, Italy}
\email{bella@dmi.unict.it}
\author{Santi Spadaro}
\address{Department of Mathematics and Computer Science \\
University of Catania \\
Citt\'a universitaria\\  
viale A. Doria 6 \\
95125 Catania, Italy}
\email{santidspadaro@gmail.com}
\subjclass[2000]{Primary: 54A25, 54D20; Secondary: 54E99, 54D55}
\keywords{Cardinal inequality, Lindel\"of, Arhangel'skii Theorem, elementary submodel, cellular-Lindel\"of, ccc}
\begin{document}
\maketitle

\begin{abstract} 
A space $X$ is said to be \emph{cellular-Lindel\"of} if for every cellular family $\mathcal{U}$ there is a Lindel\"of subspace $L$ of $X$ which meets every element of $\mathcal{U}$. Cellular-Lindel\"of spaces generalize both Lindel\"of spaces and spaces with the countable chain condition. Solving questions of Xuan and Song, we prove that every cellular-Lindel\"of monotonically normal space is Lindel\"of and that every cellular-Lindel\"of space with a regular $G_\delta$-diagonal has cardinality at most $2^\mathfrak{c}$. We also prove that every normal cellular-Lindel\"of first-countable space has cardinality at most continuum under $2^{<\mathfrak{c}}=\mathfrak{c}$ and that every normal cellular-Lindel\"of space with a $G_\delta$-diagonal of rank $2$ has cardinality at most continuum. \end{abstract}

\section{Introduction}

Two of the most important cardinal inequalities regarding Hausdorff topological spaces are Arhangel'skii's Theorem (see \cite{A} and \cite{H}, for example) stating that the cardinality of every Lindel\"of first-countable space does not exceed the continuum and the Hajnal-Juh\'asz inequality (see \cite{J}), which in the countable case says that every first-countable space with the countable chain condition has cardinality at most continuum. 

The weak Lindel\"of property is a common generalization of the Lindel\"of property and the countable chain condition which may be used to state a common strengthening of Arhangel'skii's Theorem and of the Hajnal-Juh\'asz inequality within the realm of normal spaces.

\begin{definition}
A space $X$ is weakly Lindel\"of if for every open cover $\mathcal{U}$ of $X$ there is a countable subcollection $\mathcal{V} \subset \mathcal{U}$ such that $X \subset \overline{\bigcup \mathcal{V}}$.
\end{definition}

It can be readily seen that every Lindel\"of space is weakly Lindel\"of. It's also easy to see that every open cover which does not have a countable subcollection with a dense union can be refined to an uncountable \emph{cellular family}, that is an uncountable family of pairwise disjoint non-empty open sets. So the countable chain condition implies the weak Lindel\"of property.

\begin{theorem}
(Bell, Ginsburg and Woods, \cite{BGW}) Every normal weakly Lindel\"of first-countable space has cardinality at most continuum.
\end{theorem}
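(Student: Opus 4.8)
The plan is to run an Arhangel'skii-style closing-off argument: I would build an increasing $\omega_1$-chain $\langle F_\alpha:\alpha<\omega_1\rangle$ of subsets of $X$, each of cardinality at most $\mathfrak{c}$, whose union $F$ is closed, and arrange the closing-off conditions so that $F$ is forced to equal $X$, whence $|X|=|F|\le\mathfrak{c}$. The feature that makes the weakly Lindel\"of hypothesis harder to exploit than Lindel\"ofness (as in Arhangel'skii's theorem) or the countable chain condition (as in Hajnal-Juh\'asz) is that weak Lindel\"ofness is not inherited by closed subspaces, so one cannot simply assert that $F$ is weakly Lindel\"of and thin out open covers of $F$; repairing this is precisely where normality gets used.

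The key is the following lemma, whose proof is where normality enters: \emph{if $X$ is normal and weakly Lindel\"of, $F\subseteq X$ is closed, and $\mathcal{U}$ is a family of open subsets of $X$ with $F\subseteq\bigcup\mathcal{U}$, then there is a countable $\mathcal{V}\subseteq\mathcal{U}$ with $F\subseteq\overline{\bigcup\mathcal{V}}$.} To see it, put $W=\bigcup\mathcal{U}$ and use normality to choose an open set $G$ with $F\subseteq G\subseteq\overline{G}\subseteq W$; then $\mathcal{U}\cup\{X\setminus\overline{G}\}$ is an open cover of $X$, so weak Lindel\"ofness gives a countable $\mathcal{V}\subseteq\mathcal{U}$ with $X=\overline{\big(\bigcup\mathcal{V}\big)\cup(X\setminus\overline{G})}=\overline{\bigcup\mathcal{V}}\cup\big(X\setminus\mathrm{int}\,\overline{G}\big)$, and since $F\subseteq G\subseteq\mathrm{int}\,\overline{G}$ the set $F$ is disjoint from $X\setminus\mathrm{int}\,\overline{G}$, so $F\subseteq\overline{\bigcup\mathcal{V}}$. (The same argument with ``countable'' replaced by ``of size $\le\chi(X)\cdot wL(X)$'' would give the general bound $|X|\le2^{\chi(X)\cdot wL(X)}$ for normal $X$.)

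With the lemma available, I would fix a countable neighbourhood base $\mathcal{B}_x$ at each point $x$ and build $\langle F_\alpha\rangle$ continuous at limits and satisfying: (i) $\overline{F_\alpha}\subseteq F_{\alpha+1}$; and (ii) whenever $\mathcal{V}$ is a countable subfamily of $\bigcup\{\mathcal{B}_x:x\in F_\alpha\}$ with $\overline{\bigcup\mathcal{V}}\ne X$, some point of $X\setminus\overline{\bigcup\mathcal{V}}$ belongs to $F_{\alpha+1}$. Since $\mathfrak{c}^\omega=\mathfrak{c}$, there are at most $\mathfrak{c}$ relevant families $\mathcal{V}$ at each stage, so $|F_\alpha|\le\mathfrak{c}$ can be maintained; and $F=\bigcup_{\alpha<\omega_1}F_\alpha$ is closed because $X$ is first countable while $\mathrm{cf}(\omega_1)>\omega$, so any countable subset of $F$ (in particular any convergent sequence from $F$) already lies in some $F_\beta$, and then its limit lies in $\overline{F_\beta}\subseteq F$.

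It remains to show $F=X$. Suppose $p\in X\setminus F$. Using that $X$ is $T_1$ and normal, pick an open $G$ with $F\subseteq G\subseteq\overline{G}\subseteq X\setminus\{p\}$, and for each $x\in F$ choose $U_x\in\mathcal{B}_x$ with $U_x\subseteq G$; then $p\notin\overline{\bigcup_{x\in F}U_x}$. The family $\{U_x:x\in F\}$ covers the closed set $F$, so the lemma produces a countable $C\subseteq F$ with $F\subseteq\overline{\bigcup_{x\in C}U_x}$. Choosing $\beta<\omega_1$ with $C\subseteq F_\beta$, the family $\{U_x:x\in C\}$ is a legitimate instance of (ii) at stage $\beta$, and $\overline{\bigcup_{x\in C}U_x}\ne X$ because it omits $p$; hence some $z\in F_{\beta+1}\subseteq F$ lies outside $\overline{\bigcup_{x\in C}U_x}$, contradicting $F\subseteq\overline{\bigcup_{x\in C}U_x}$. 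Therefore $F=X$ and $|X|\le\mathfrak{c}$. The one genuinely delicate point is the lemma: applying weak Lindel\"ofness directly to a cover of $X$ rather than of $F$ would force a large set such as $X\setminus F$ into the countable subfamily, after which the boundary of $F$ escapes control, and it is normality (the shrinking $F\subseteq G\subseteq\overline{G}\subseteq W$) that removes this difficulty.
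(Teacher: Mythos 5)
Your proof is correct. The paper does not prove this statement --- it quotes it from Bell, Ginsburg and Woods \cite{BGW} --- but your argument is a sound, self-contained reconstruction of the standard proof: the normality lemma (shrinking $F\subseteq G\subseteq\overline G\subseteq\bigcup\mathcal U$ and then covering $X$ by $\mathcal U\cup\{X\setminus\overline G\}$) is exactly the device that makes the weak Lindel\"of property usable on closed subsets, and the $\omega_1$-length closing-off construction, with closedness of the union coming from first countability and $\mathrm{cf}(\omega_1)>\omega$, is the same scheme used in \cite{BGW} to get the general bound $|X|\le 2^{\chi(X)\cdot wL(X)}$ that you note parenthetically.
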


The problem of whether normality can be relaxed to regularity in the above theorem is still open.

However, there are (see \cite{BGW}) Hausdorff non-regular examples of weakly Lindel\"of first-countable spaces of arbitrarily large cardinality, so one cannot expect to find a common generalization to Arhangel'skii's Theorem and the Hajnal-Juh\'asz inequality by using the weak Lindel\"of property.

In \cite{BS} we proposed another common generalization of the ccc and the Lindel\"of property. 

\begin{definition}
A space is called \emph{cellular-Lindel\"of} if for every cellular family $\mathcal{U}$ there is a Lindel\"of subspace $L$ of $X$ such that $U \cap L \neq \emptyset$, for every $U \in \mathcal{U}$.
\end{definition}

We noted that every cellular-Lindel\"of first-countable space has cardinality at most $2^\mathfrak{c}$ and asked whether the bound could be improved from $2^{\mathfrak{c}}$ to $\mathfrak{c}$. 

\begin{question} \cite{BS} \label{mainquestion}
Let $X$ be a first-countable cellular-Lindel\"of space. Is $|X| \leq \mathfrak{c}$?
\end{question}

A positive answer would lead to a common generalization of the Arhangel'skii and Hajnal-Juh\'asz inequalities.

In this note we prove that, under $2^{<\mathfrak{c}} = \mathfrak{c}$, every normal cellular-Lindel\"of first countable space has cardinality at most continuum, thus partially solving Question $\ref{mainquestion}$. We also prove, solving a question of Xuan and Song from \cite{XS2}, that every monotonically normal cellular-Lindel\"of space is Lindel\"of. As a byproduct, we obtain that Question $\ref{mainquestion}$ has a positive answer in ZFC for the class of monotonically normal spaces. We also prove that every normal cellular-Lindel\"of space with a rank $2$ diagonal has cardinality at most continuum. This gives a partial answer to a question from \cite{XS1}.

In our proofs we will sometimes use elementary submodels of the structure $(H(\mu), \epsilon)$. We believe that they make closing off arguments more transparent and concise. We encourage readers who have not done so already to acquaint themselves with Dow's survey \cite{D}. Recall that $H(\mu)$ is the set of all sets whose transitive closure has cardinality smaller than $\mu$. When $\mu$ is regular uncountable, $H(\mu)$ is known to satisfy all axioms of set theory, except the power set axiom. We say, informally, that a formula is satisfied by a set $S$ if it is true when all bounded quantifiers are restricted to $S$. A set $M \subset H(\mu)$ is said to be an elementary submodel of $H(\mu)$ (and we write $M \prec H(\mu)$) if a formula with parameters in $M$ is satisfied by $H(\mu)$ if and only if it is satisfied by $M$. 

The downward L\"owenheim-Skolem theorem guarantees that for every $S \subset H(\mu)$, there is an elementary submodel $M \prec H(\mu)$ such that $|M| \leq |S| \cdot \omega$ and $S \subset M$. This theorem is sufficient for many applications, but it is often useful (especially in cardinal bounds for topological spaces) to have the following closure property. We say that $M$ is $\kappa$-closed if for every $S \subset M$ such that $|S| \leq \kappa$ we have $S \in M$. For every countable set $S \subset H(\mu)$ there is always a $\kappa$-closed elementary submodel $M \prec H(\mu)$ such that $|M|=2^{\kappa}$ and $S \subset M$.

The following theorem is also used often: let $M \prec H(\mu)$ such that $\kappa + 1 \subset M$ and $S \in M$ be such that $|S| \leq \kappa$. Then $S \subset M$.

All spaces under consideration are assumed to be Hausdorff. Undefined notions can be found in \cite{E} for topology and \cite{Ku} for set theory. Our notation regarding cardinal functions mostly follows \cite{J}. 

\section{When is a cellular-Lindel\"of space Lindel\"of?}

It is apparent from the definition that every ccc space is cellular-Lindel\"of and every Lindel\"of space is cellular-Lindel\"of. The converses to either of the previous two implications do not hold as can be shown by simple examples distinguishing the ccc and the Lindel\"of property. 

Moreover, the weak Lindel\"of property does not imply the cellular-Lindel\"ofness. In \cite{XS1} Xuan and Song even provided an example of a weakly Lindel\"of Moore space which is not cellular-Lindel\"of. However, the question about the existence of a cellular-Lindel\"of non-weakly Lindel\"of space is still open.

We will prove that the cellular-Lindel\"of property and the Lindel\"ofness are equivalent for monotonically normal spaces. This solves Questions 4.11--4.13 from \cite{XS2}.

Given a topological space $X$ we indicate with $\mathcal{U}(X)$ the set of all pairs $(x,U)$, where $U$ is open and $x \in U$.

\begin{definition}
A topological space $(X, \tau)$ is called \emph{monotonically normal} if there exists an operator $H: \mathcal{U}(X) \to \tau$ with the following properties:

\begin{enumerate}
\item $x \in H(x,U) \subset U$, for every $(x, U) \in \mathcal{U}(X)$.
\item If $H(x,U) \cap H(y, V) \neq \emptyset$ then $x \in V$ or $y \in U$.
\end{enumerate}
\end{definition}

Monotonically normal spaces generalize both metric spaces and linearly ordered spaces. Moreover, monotone normality is a hereditary property, so even every \emph{GO space} (i.e., a subspace of a linearly ordered space) is monotonically normal (see \cite{HLZ} and \cite{Bo}).

The proof of Theorem $\ref{MNtheorem}$ is a variation on the proof of Theorem 3.17 from \cite{JTW}. We're going to need a characterization of paracompactness for monotonically normal spaces due to Balogh and Rudin.

\begin{lemma}
(Balogh and Rudin \cite{BR}) A monotonically normal space $X$ is paracompact if and only if it does not contain a closed subset homeomorphic to a stationary subset of a regular uncountable cardinal.
\end{lemma}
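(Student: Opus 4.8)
The plan is to prove the two directions separately, the forward one by a short direct argument and the reverse one — the substantive content, due to Balogh and Rudin — only in outline.

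Forward direction (paracompact $\Rightarrow$ no closed copy of a stationary set). Since paracompactness is inherited by closed subspaces, I would reduce to showing that a stationary subset $S$ of a regular uncountable cardinal $\kappa$, with the order topology, is never paracompact. For this, consider the open cover $\{S\cap[0,\alpha] : \alpha<\kappa\}$ of $S$ by sets bounded in $\kappa$, and suppose toward a contradiction that $\mathcal{V}$ is a locally finite open refinement. For each $\alpha\in S$ choose $V_\alpha\in\mathcal{V}$ with $\alpha\in V_\alpha$ and, since $V_\alpha$ is open in $S$, choose $c(\alpha)<\alpha$ with $S\cap(c(\alpha),\alpha]\subseteq V_\alpha$. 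By the Pressing Down Lemma there are a stationary $S'\subseteq S$ and a $\gamma<\kappa$ with $c(\alpha)=\gamma$ for all $\alpha\in S'$. Each member of $\mathcal{V}$ is bounded in $\kappa$ and $S'$ is cofinal, so $\{V_\alpha : \alpha\in S'\}$ is infinite, and indeed $\{V_\alpha : \alpha\in S',\ \alpha>\beta\}$ is infinite for every $\beta<\kappa$; hence, fixing $\delta\in S'$ with $\delta>\gamma$, I can pick $\alpha_0<\alpha_1<\cdots$ in $S'$, all above $\delta$, with the $V_{\alpha_n}$ pairwise distinct. Then $\delta\in S\cap(\gamma,\alpha_n]\subseteq V_{\alpha_n}$ for every $n$, so $\delta$ lies in infinitely many distinct members of the locally finite family $\mathcal{V}$ — a contradiction.

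Reverse direction (no closed copy of a stationary set $\Rightarrow$ paracompact). Here I would argue by contraposition: assume $X$ is monotonically normal, with operator $H$, and not paracompact, fix an open cover $\mathcal{U}$ with no locally finite open refinement, and build a closed subspace of $X$ homeomorphic to a stationary subset of a regular uncountable cardinal. The natural approach is a transfinite recursion producing an increasing continuous chain of open sets $\langle G_\alpha : \alpha<\lambda\rangle$ and families $\mathcal{H}_\alpha$ of sets of the form $H(x,U)$ refining $\mathcal{U}$, with $G_\alpha\subseteq\bigcup\mathcal{H}_\alpha$ and $\mathcal{H}_\alpha$ locally finite at each point of $G_\alpha$: at successor stages one enlarges $G_\alpha$ by adjoining the $H$-sets of a maximal relatively discrete subset of $X\setminus G_\alpha$, keeping local finiteness on $G_{\alpha+1}$ via the collectionwise normality that monotone normality provides, and at limit stages one takes unions. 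Since $\mathcal{U}$ admits no locally finite open refinement, the recursion can never terminate with $\bigcup_\alpha G_\alpha=X$ in a way that assembles into a $\sigma$-locally finite refinement; one then tracks, for each $x$, the stage $\rho(x)=\min\{\alpha : x\in G_\alpha\}$ at which it is first captured and shows that the set of points captured ``too late'' is closed in $X$ and, as a space, is a stationary subset of $\lambda$, with $\lambda$ (after passing to a cofinal subsequence) forced to be a regular uncountable cardinal because otherwise the construction could be pushed through. The role of the operator $H$ is precisely to make this extracted subspace genuinely closed and order-homeomorphic to a stationary set rather than an amorphous non-paracompact piece.

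The hard part is of course this reverse direction: arranging the recursion so that the $\mathcal{H}_\alpha$ really do stay locally finite as the $G_\alpha$ grow, controlling the limit stages, and — the essential point — verifying that the ``late'' points carry exactly the topology of a closed stationary subset of a regular cardinal. This is the main theorem of \cite{BR}, whose proof we do not reproduce; only the statement of the Lemma will be used in the sequel.
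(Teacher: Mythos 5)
The paper does not prove this lemma at all: it is quoted verbatim from Balogh and Rudin \cite{BR}, so there is no internal proof to compare against. Your forward direction is correct and complete: reducing to the non-paracompactness of a stationary $S\subseteq\kappa$ via heredity to closed subspaces, covering $S$ by bounded initial segments, and using the Pressing Down Lemma to produce a point $\delta$ lying in infinitely many distinct members of any putative locally finite refinement is the standard argument, and the details you give (regressivity of $c$, boundedness of the fibers $\alpha\mapsto V_\alpha$ forcing infinitely many distinct $V_{\alpha_n}$ all containing $\delta$) all check out. The reverse direction, which is the actual content of the Balogh--Rudin theorem, you give only as a heuristic outline and explicitly defer to \cite{BR}; that outline is in the right spirit but is not a proof (in particular, the claim that the ``late'' points form a closed copy of a stationary set is precisely the hard technical core, and your sketch does not establish it). Since the paper itself treats the lemma as an external citation, this deferral is acceptable, and you in fact do strictly more than the paper by supplying the easy direction; just be clear that only the forward implication has been proved here.
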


\begin{theorem} \label{MNtheorem}
Let $X$ be a monotonically normal cellular-Lindel\"of space. Then $X$ is Lindel\"of.
\end{theorem}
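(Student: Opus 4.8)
The plan is to show that a monotonically normal cellular-Lindel\"of space $X$ is first paracompact and then Lindel\"of. By the Balogh--Rudin characterization, $X$ fails to be paracompact only if it contains a closed copy $S$ of a stationary subset of some regular uncountable cardinal $\kappa$. So the first step is to rule this out: a stationary set in $\kappa$ carries an obvious cellular family of size $\kappa$ (for instance the collection of successor-point singletons, or more robustly the clopen intervals $(\alpha,\alpha+1]$ between consecutive elements), and in fact one whose trace on $S$ has the property that any subset meeting all of its members is unbounded in $\kappa$, hence closed-unbounded-like and not Lindel\"of. Because $S$ is \emph{closed} in $X$, a cellular family of $X$ refining these pieces would have to be met by a Lindel\"of $L\subseteq X$; then $L\cap S$ is Lindel\"of, closed in $L$, and meets cofinally many of the cellular pieces, so it is an uncountable (indeed $\kappa$-sized) unbounded subset of $S$ — but an unbounded subset of a stationary set in a regular uncountable $\kappa$ is not Lindel\"of (it has an open cover by initial segments with no countable subcover). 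This contradiction shows $X$ is paracompact.

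Next, having $X$ paracompact (and monotonically normal, hence collectionwise normal and in particular regular), I would upgrade "cellular-Lindel\"of $+$ paracompact" to "Lindel\"of". The cleanest route: a paracompact space is Lindel\"of iff it has no uncountable closed discrete subset (equivalently, iff its extent is countable), since a paracompact space of countable extent is Lindel\"of. Suppose $D\subseteq X$ is closed discrete and uncountable. By paracompactness (collectionwise normality) there is a discrete-in-$X$ family $\{V_d : d\in D\}$ of open sets with $d\in V_d$; this is in particular a cellular family of size $|D|\ge\aleph_1$. Let $L$ be a Lindel\"of subspace meeting every $V_d$, and pick $x_d\in L\cap V_d$. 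Then $\{x_d : d\in D\}$ is a subset of $L$ which, because $\{V_d\}$ is discrete in $X$, is closed and discrete in $X$ and hence in $L$; so $L$ contains an uncountable closed discrete set, contradicting Lindel\"ofness of $L$. Therefore $X$ has countable extent, and being paracompact it is Lindel\"of.

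The step I expect to be the main obstacle is the first one — making precise that the relevant cellular family on the closed copy of a stationary set $S$ \emph{forces} any transversal to be unbounded in $\kappa$, and that such an unbounded set is genuinely non-Lindel\"of even as a subspace of the possibly badly behaved ambient $X$. Two points need care: (i) choosing the cellular family in $X$, not just in $S$ — one takes open sets of $X$ witnessing that consecutive points of $S$ can be separated, and one must check the pairwise disjointness survives in $X$ (this is where monotone normality of $X$, via the operator $H$, or just Hausdorffness together with the order structure of $S$, is used); and (ii) verifying that $L\cap S$ is closed in the Lindel\"of space $L$ and is unbounded, so that it inherits non-Lindel\"ofness. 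Once the paracompactness is in hand, the passage to Lindel\"ofness via discreteness of the witnessing family is routine, as is the appeal to the standard fact that paracompact spaces of countable extent are Lindel\"of.
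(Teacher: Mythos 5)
Your overall architecture (Balogh--Rudin to reduce to excluding a closed copy $S$ of a stationary set, plus the observation that a paracompact space of countable extent is Lindel\"of) is the same as the paper's, and your second step is correct: expanding an uncountable closed discrete set to a discrete open family via collectionwise normality, taking a transversal inside the Lindel\"of set $L$, and noting it is closed discrete in $L$ is exactly the standard argument (the paper simply cites Xuan--Song for countable extent). The problem is in the first step, and it is a genuine gap rather than a detail to be checked. From ``$L$ meets every member of the cellular family'' you infer that $L\cap S$ meets cofinally many of the pieces and is therefore an unbounded, non-Lindel\"of subset of $S$. This does not follow. The cellular family must consist of open subsets of $X$, each of which meets $S$ (or the set $D$ of isolated points of $S$) in a single point; but the point of $L$ witnessing $L\cap W\neq\emptyset$ can lie anywhere in $W$, in particular outside $S$. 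Nothing prevents $L\cap S$ from being bounded or even empty, so the contradiction you aim for never materializes. Your point (ii) flags exactly this unboundedness claim as needing verification, but it is not merely delicate --- it is false as stated, and no choice of the cellular family fixes it, because cellular-Lindel\"ofness gives you no control over \emph{where} inside each open piece $L$ lands.

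The paper's proof closes this gap with an argument you would need to supply: it takes for each $x\in X$ (not just $x\in S$) a neighbourhood $U_x$ with $|U_x\cap D|<\kappa$, forms the cellular family $\{H(x,H(x,V_x)):x\in D\}$ using the monotone normality operator twice, and then uses regularity of $\kappa$ to extract from the Lindel\"of set $L$ a \emph{complete accumulation point} $p$ of the family --- a point that may lie anywhere in $X$. One first checks $p\notin H(x,V_x)$ for every $x$, and then the defining property of $H$ forces every member $H(x,H(x,V_x))$ meeting $H(p,U_p)$ to satisfy $x\in U_p$; since $|U_p\cap D|<\kappa$, this contradicts complete accumulation. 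That two-fold application of the operator $H$, together with working with an accumulation point in $X$ rather than trying to push $L$ into $S$, is the actual content of the theorem and is missing from your proposal.
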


\begin{proof}
As noted by the authors of \cite{XS2}, the space $X$ has countable extent. Therefore, to show that $X$ is Lindel\"of, it is sufficient to prove that $X$ is paracompact. If the space $X$ were not paracompact, it would contain a closed copy $S$ of a stationary set of some regular uncountable cardinal $\kappa$. Let $D$ be the set of all isolated points of $S$. By the nature of the topology on $S$ we can choose, for every $x \in X$ a neighbourhood $U_x$ of $x$ such that $|U_x \cap D| < \kappa$.

For every $x \in D$, choose an open set $V_x$ such that $V_x \cap D=\{x\}$. Note that $\mathcal{V}=\{H(x, H(x, V_x)): x \in D \}$ is a cellular family, and hence there is a Lindel\"of subspace $L$ of $X$ such that $L \cap V \neq \emptyset$, for every $V \in \mathcal{V}$. Since $D$ has cardinality $\kappa$, the family $\mathcal{V}$ also has cardinality $\kappa$ and since $\kappa$ is a regular uncountable cardinal and $L$ is Lindel\"of, $L$ must contain a complete accumulation point for the family $\mathcal{V}$, that is a point $p \in L$ such that $\{V \in \mathcal{V}: O \cap V \neq \emptyset \}$ has cardinality $\kappa$, for every open neighbourhood $O$ of $p$. Clearly $p \notin H(x,V_x)$ for every $x \in D$, or otherwise $H(x,V_x)$ would be an open neighbourhood of $p$ which meets only one element of the family $\mathcal{V}$. So if $H(p, U_p) \cap H(x, H(x, V_x)) \neq \emptyset$, for some $x \in D$, we must have $x \in U_p$, by the second property of a monotone normality operator.

It follows that $\{x \in D: H(x,H(x, V_x)) \cap H(p, U_p) \neq \emptyset \} \subset D \cap U_p$, and since the latter set has cardinality smaller than $\kappa$, the point $p$ is not a complete accumulation point for the family $\mathcal{V}$, which is a contradiction.
\end{proof}

\begin{corollary}
Let $X$ be a GO space. Then $X$ is Lindel\"of if and only if $X$ is cellular-Lindel\"of.
\end{corollary}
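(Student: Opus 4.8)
The plan is to obtain the Corollary as an immediate composition of results already established, so essentially no new argument is needed. The forward implication is completely general and requires nothing special about GO spaces: it was observed at the beginning of Section~2 (and follows at once from the definition, taking the Lindel\"of subspace to be $X$ itself) that every Lindel\"of space is cellular-Lindel\"of. Hence all the content sits in the reverse implication.

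For the reverse implication I would first invoke the fact, recalled just before Theorem~\ref{MNtheorem}, that every GO space is monotonically normal: linearly ordered topological spaces are monotonically normal, monotone normality is hereditary, and a GO space is by definition a subspace of a LOTS (see \cite{HLZ} and \cite{Bo}). Thus, given a cellular-Lindel\"of GO space $X$, it is in particular a monotonically normal cellular-Lindel\"of space, and Theorem~\ref{MNtheorem} applies verbatim to conclude that $X$ is Lindel\"of.

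I do not expect any genuine obstacle here, since the proof is a two-line deduction from Theorem~\ref{MNtheorem}; the only point worth checking is that the hypotheses of that theorem are actually met in the GO setting, namely that the space is Hausdorff (automatic under our standing assumption) and monotonically normal (as just noted). It is also worth remarking that, together with Arhangel'skii's Theorem, this gives a positive answer to Question~\ref{mainquestion} for first-countable GO spaces, although this is already subsumed by the monotonically normal case handled in Theorem~\ref{MNtheorem}.
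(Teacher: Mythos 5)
Your proof is correct and is exactly the intended deduction: the forward direction is the trivial observation that a Lindel\"of space is cellular-Lindel\"of (take $L=X$), and the reverse direction combines the hereditariness of monotone normality for GO spaces with Theorem~\ref{MNtheorem}. This matches the paper's (implicit) argument, which states the corollary without proof as an immediate consequence of Theorem~\ref{MNtheorem}.
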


\begin{corollary}
Let $X$ be a LOTS. Then $X$ is Lindel\"of if and only if $X$ is cellular-Lindel\"of.
\end{corollary}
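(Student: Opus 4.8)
The plan is to deduce this statement directly from what has already been established, since a LOTS is a special case of both the classes of spaces treated above. First I would observe that every LOTS is trivially a GO space (it is a subspace of itself), so the equivalence is literally an instance of the preceding corollary. Alternatively, and just as quickly, I would invoke the fact recalled in the text that every linearly ordered topological space is monotonically normal, so that Theorem \ref{MNtheorem} applies verbatim.

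Concretely, the argument would go as follows. For the nontrivial implication, suppose $X$ is a cellular-Lindel\"of LOTS. Since $X$ is monotonically normal, Theorem \ref{MNtheorem} gives that $X$ is Lindel\"of. For the reverse implication, one only needs the elementary observation made at the beginning of Section 2 that every Lindel\"of space is cellular-Lindel\"of (given a cellular family $\mathcal{U}$, the whole space $X=L$ itself is a Lindel\"of subspace meeting every member of $\mathcal{U}$); this requires no hypothesis on $X$ at all.

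I do not expect any genuine obstacle here: the content of the corollary is entirely contained in Theorem \ref{MNtheorem} together with the standard fact that LOTS are monotonically normal, so the proof is a one-line specialization. The only thing worth being careful about is citing the monotone normality of LOTS (or, equivalently, pointing to the GO-space corollary) rather than re-deriving it, and making explicit that the easy direction uses nothing beyond the definitions.

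\begin{proof}
Every LOTS is monotonically normal, so this is an immediate consequence of Theorem \ref{MNtheorem} and the fact that every Lindel\"of space is cellular-Lindel\"of.
\end{proof}
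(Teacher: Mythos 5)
Your proof is correct and matches the paper's intent exactly: the corollary is stated without proof precisely because it follows immediately from Theorem \ref{MNtheorem}, the fact (recalled in the text) that every LOTS is monotonically normal, and the trivial observation that Lindel\"of implies cellular-Lindel\"of. Nothing to add.
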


\section{The cardinality of cellular-Lindel\"of first-countable spaces}

In this section we will establish a cardinal inequality for cellular-Lindel\"of spaces and find a class of spaces where cellular-Lindel\"of implies weakly Lindel\"of, under CH. The core of the argument of both results is the following lemma, which roughly says that a cellular-Lindel\"of space is close to being weakly Lindel\"of for covers of small size.

\begin{lemma} \label{mainlem}
Let $X$ be a cellular-Lindel\"of space. Let $\mathcal{U}$ be an open cover of $X$ having cardinality continuum. Then there is a subcollection $\mathcal{V} \subset \mathcal{U}$ of cardinality smaller than the continuum such that $X \subset \overline{\bigcup \mathcal{V}}$.
\end{lemma}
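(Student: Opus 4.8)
The plan is to argue by contradiction, building a cellular family so large that the Lindel\"of subspace promised by cellular-Lindel\"ofness cannot exist. So suppose that $\overline{\bigcup\mathcal{W}}\neq X$ for every $\mathcal{W}\subseteq\mathcal{U}$ with $|\mathcal{W}|<\mathfrak{c}$. Fix an enumeration $\mathcal{U}=\{U_\xi:\xi<\mathfrak{c}\}$ and define ordinals $\beta_\alpha<\mathfrak{c}$ by recursion on $\alpha<\mathfrak{c}$: at stage $\alpha$ put $F_\alpha=\overline{\bigcup_{\gamma<\alpha}U_{\beta_\gamma}}$; since $|\alpha|<\mathfrak{c}$, the assumption gives $F_\alpha\neq X$, so --- $\mathcal{U}$ being a cover --- some member of $\mathcal{U}$ fails to be contained in $F_\alpha$, and I let $\beta_\alpha$ be the least index of such a member and set $W_\alpha=U_{\beta_\alpha}\setminus F_\alpha$, a nonempty open set. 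The sets $F_\alpha$ increase with $\alpha$, and for $\alpha<\alpha'$ we have $U_{\beta_\alpha}\subseteq F_{\alpha'}$, so $W_{\alpha'}$ is disjoint from $U_{\beta_\alpha}\supseteq W_\alpha$; the same remark shows $\alpha\mapsto\beta_\alpha$ is injective. Thus $\{W_\alpha:\alpha<\mathfrak{c}\}$ is a cellular family of size $\mathfrak{c}$, in particular uncountable.

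The key fact extracted from this construction is: \emph{for every $\xi<\mathfrak{c}$ there is some $\alpha<\mathfrak{c}$ with $U_\xi\subseteq F_\alpha$}. Otherwise $U_\xi\not\subseteq F_\alpha$ for all $\alpha$, so at every stage $\xi$ is a candidate and hence $\beta_\alpha\leq\xi$; but then injectivity of $\alpha\mapsto\beta_\alpha$ would embed $\mathfrak{c}$ into $\xi+1$, which is absurd.

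Now I would invoke cellular-Lindel\"ofness to fix a Lindel\"of $L\subseteq X$ meeting every $W_\alpha$, pick $y_\alpha\in L\cap W_\alpha$, and set $E=\{y_\alpha:\alpha<\mathfrak{c}\}$, a set of size $\mathfrak{c}$ (the $y_\alpha$ being pairwise distinct). Since $\mathcal{U}$ covers $L$ and $L$ is Lindel\"of, there is a countable $C\subseteq\mathfrak{c}$ with $L\subseteq\bigcup_{\xi\in C}U_\xi$; because $\mathrm{cf}(\mathfrak{c})>\omega$ (K\"onig's theorem), a pigeonhole argument gives a single $\xi^*\in C$ with $|U_{\xi^*}\cap E|=\mathfrak{c}$. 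On the other hand, by the key fact there is $\alpha_0<\mathfrak{c}$ with $U_{\xi^*}\subseteq F_{\alpha_0}$, and then for every $\alpha\geq\alpha_0$ we have $y_\alpha\notin F_\alpha\supseteq F_{\alpha_0}\supseteq U_{\xi^*}$, so $U_{\xi^*}\cap E\subseteq\{y_\alpha:\alpha<\alpha_0\}$ has size $<\mathfrak{c}$ --- a contradiction.

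The step I expect to be the main obstacle is precisely that the index $\xi^*$ coming out of the Lindel\"of subcover is not something we choose, so the recursion has to be arranged so that \emph{every} member of $\mathcal{U}$ eventually lands inside the closure of the union chosen so far; taking $\beta_\alpha$ least at each stage is what secures that. One should also keep in mind that $\mathfrak{c}$ may be singular, so the pigeonhole and cardinality estimates must be run through $\mathrm{cf}(\mathfrak{c})>\omega$ rather than through regularity of $\mathfrak{c}$.
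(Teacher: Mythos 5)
Your proof is correct and follows essentially the same route as the paper's: a transfinite recursion that turns the failure of the conclusion into a cellular family of length $\mathfrak{c}$, followed by a contradiction obtained from a countable subcover of the Lindel\"of set $L$ together with $\mathrm{cf}(\mathfrak{c})>\omega$. The only difference is bookkeeping: the paper takes closures of initial segments of the full enumeration, so the countable subcover is immediately trapped below some stage of the recursion, whereas you close off only over the selected sets and compensate with the least-index choice and a pigeonhole argument on $E$.
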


\begin{proof}
Suppose the statement is false and let $\{U_\alpha: \alpha < \mathfrak{c}\}$ be an enumeration of $\mathcal{U}$. Then we can find a strictly increasing sequence of ordinals $\{\alpha_\beta: \beta < \mathfrak{c}\}$ such that $V_\beta=U_{\alpha_{\beta+1}} \setminus \overline{\bigcup \{U_\gamma: \gamma \leq \alpha_\beta \}}$ is a non-empty open set, for every $\beta < \mathfrak{c}$. But then the cellular-Lindel\"of property implies the existence of a Lindel\"of subspace $L$ of $X$ such that $L \cap V_\beta \neq \emptyset$, for every $\beta < \mathfrak{c}$. Since $\mathcal{U}$ is an open cover of the Lindel\"of space $L$, there must be an ordinal $\beta < \mathfrak{c}$ such that $L \subset \bigcup \{U_\gamma: \gamma \leq \alpha_\beta\}$, but this contradicts $L \cap V_\beta \neq \emptyset$ and we are done.

\end{proof}

The following theorem gives a partial positive answer to Question 4 from \cite{BS}.

\begin{theorem} \label{thmineq}
($2^{<\mathfrak{c}}=\mathfrak{c}$) Let $X$ be a normal sequential cellular-Lindel\"of space such that $\chi(X) \leq \mathfrak{c}$. Then $|X| \leq \mathfrak{c}$
\end{theorem}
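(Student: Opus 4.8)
The plan is to adapt the standard Arhangel'skii-style closing-off argument, using a $\mathfrak{c}$-sized increasing chain of elementary submodels, but with Lemma \ref{mainlem} substituting for the weak Lindel\"of property at the key step. First I would build an increasing continuous chain $\{M_\alpha : \alpha < \mathfrak{c}^+\}$ of elementary submodels of $(H(\mu), \in)$, each of size $\mathfrak{c}$, each containing $X$, $\tau$, a fixed neighbourhood-assignment function witnessing $\chi(X) \leq \mathfrak{c}$, and all the relevant parameters, with $[M_\alpha]^{\le\mathfrak{c}} \cap M_{\alpha+1} \supseteq$ enough closure; using $2^{<\mathfrak{c}} = \mathfrak{c}$ one can in fact take each $M_\alpha$ to be ${<}\mathfrak{c}$-closed (every subset of $M_\alpha$ of size ${<}\mathfrak{c}$ lies in $M_{\alpha+1}$, or even in $M_\alpha$ if one builds them $\mathord{<}\mathfrak{c}$-closed from the start, which $2^{<\mathfrak{c}}=\mathfrak{c}$ permits). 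Let $M = \bigcup_{\alpha<\mathfrak{c}^+} M_\alpha$; then $|M| \le \mathfrak{c}^+$, $M$ is ${<}\mathfrak{c}$-closed, and $M \cap X$ is ${<}\mathfrak{c}$-closed in $X$ as well, since $\chi(X) \le \mathfrak{c}$ forces closed sets of size ${\le}\mathfrak{c}$ to be "captured." Actually, the target is $|X|\le\mathfrak c$, so I would instead use a single ${<}\mathfrak c$-closed $M$ of size $\mathfrak c$ (which exists by $2^{<\mathfrak c}=\mathfrak c$) and prove $X \subseteq M$.

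The crux is to show $X \cap M$ is closed in $X$, whence $X \cap M = X$ by elementarity (if $X \cap M$ were a proper nonempty closed set, $H(\mu)$ would see a point outside it, and that point would be in $M$ by the usual reflection of the relevant predicates). To prove $X \cap M$ is closed, take $p \in \overline{X \cap M}$; here is where sequentiality and normality enter. Since $X$ is sequential and normal, and $\chi(X) \le \mathfrak{c}$, I would argue that $p$ is the limit of a "small" (size ${<}\mathfrak{c}$, in fact the closure of $X\cap M$ can be reached through sets of size $\le\mathfrak c$) subset of $X \cap M$; the ${<}\mathfrak{c}$-closure of $M$ then puts that convergent sequence/net — and hence its limit $p$, which is definable from it — into $M$. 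To make "small convergent set" precise I would iterate: inside $M$, for each point already placed in $M$ pick from the fixed neighbourhood base a local base of size $\le\mathfrak c$; then $\overline{X\cap M}$ is obtained by closing under limits of sequences, and sequentiality guarantees this stabilises, while normality is used (via Lemma \ref{mainlem}) to keep the relevant open covers from requiring uncountably-cofinally-many pieces.

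The role of Lemma \ref{mainlem} is the genuinely new ingredient and the main obstacle. In the classical weakly-Lindel\"of argument one covers $X \setminus \{\text{stuff}\}$ by the chosen neighbourhoods and extracts a countable subcover with dense union; here cellular-Lindel\"ofness only gives, via Lemma \ref{mainlem}, a subfamily of size ${<}\mathfrak{c}$ with dense union, provided the cover has size exactly $\mathfrak{c}$. So the plan is: at the closing-off stage, the neighbourhood assignment $\{H(x, U) : x \in X \cap M_\alpha\}$ restricted to a partial model $M_\alpha$ of size $\mathfrak c$ is an open family of size $\mathfrak c$; if it covered $X$ it would by Lemma \ref{mainlem} have a subfamily $\mathcal{V} \in M_{\alpha+1}$ of size ${<}\mathfrak{c}$ with $X \subseteq \overline{\bigcup \mathcal V}$, and by ${<}\mathfrak c$-closure $\mathcal V \subseteq M_{\alpha+1}$, so $\overline{\bigcup\mathcal V} \in M_{\alpha+1}$ equals $X$ by elementarity — giving $X = \overline{X \cap M}$; then normality plus sequentiality upgrades "dense" to "all of $X$." The hard part will be precisely this upgrade: squeezing out the closure, i.e. showing that the ${<}\mathfrak{c}$-many points witnessed in $M$ are not merely dense but exhaust $X$. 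I expect to handle it by a secondary closing-off using normality to separate $\overline{X\cap M}\setminus(X\cap M)$ from a putative external point, contradicting density, exactly along the lines of how Bell--Ginsburg--Woods push weak Lindel\"ofness to a cardinality bound in the normal first-countable case; sequentiality replaces first-countability by controlling which limits can appear. The bookkeeping to ensure every open cover that arises has size exactly $\mathfrak{c}$ (not less), so that Lemma \ref{mainlem} applies cleanly, is the remaining technical nuisance.
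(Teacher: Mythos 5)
Your overall architecture is the paper's: a single ${<}\mathfrak{c}$-closed elementary submodel $M$ of size $\mathfrak{c}$ (which $2^{<\mathfrak{c}}=\mathfrak{c}$ provides), sequentiality to see that $X\cap M$ is closed, and Lemma \ref{mainlem} applied to a cover built from neighbourhoods of points of $X\cap M$. But the step you yourself flag as ``the hard part'' is exactly where the proof lives, and your plan for it contains errors. First, the assertion that once $X\cap M$ is closed it must equal $X$ ``by elementarity, since $H(\mu)$ would see a point outside it'' is false as stated: $X\cap M$ is a subset of $M$ but in general not an \emph{element} of $M$, so the formula ``there is a point outside $X\cap M$'' has a parameter not in $M$ and cannot be reflected. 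Second, the family $\{U_x: x\in X\cap M\}$ need not cover $X$, so Lemma \ref{mainlem} does not apply to it; your proposed fix, a ``secondary closing-off separating $\overline{X\cap M}\setminus (X\cap M)$ from an external point,'' separates nothing, because that difference is empty once $X\cap M$ is closed. Third, a subfamily whose union is dense in $X$ is not by itself a contradiction, and no amount of ``upgrading dense to all of $X$'' is attempted in the actual argument.

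The missing mechanism is the following. Fix $p\in X\setminus M$; since $X\cap M$ is closed and $p\notin X\cap M$, regularity gives an open $U\supset X\cap M$ with $p\notin\overline{U}$, and one chooses each $U_x$ ($x\in X\cap M$) from a local base lying in $M$ \emph{subject to} $U_x\subset U$. Normality is then used once, to separate the closed sets $X\cap M$ and $X\setminus\bigcup\{U_x: x\in X\cap M\}$ by disjoint open sets $G_1\supset X\cap M$ and $G_2$; now $\{U_x: x\in X\cap M\}\cup\{G_2\}$ genuinely covers $X$ and has size $\mathfrak{c}$, so Lemma \ref{mainlem} yields $C\subset X\cap M$ with $|C|<\mathfrak{c}$ and $X\subset\overline{\bigcup\{U_x:x\in C\}}\cup\overline{G_2}$. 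Since $\overline{G_2}$ misses $G_1\supset X\cap M$, one gets $X\cap M\subset\overline{\bigcup\{U_x:x\in C\}}$; by ${<}\mathfrak{c}$-closure $C\in M$ and $\{U_x:x\in C\}\in M$, so this is a statement about parameters in $M$, true in $M$, hence true in $H(\theta)$: $X\subset\overline{\bigcup\{U_x:x\in C\}}\subset\overline{U}$. The contradiction is with $p\notin\overline{U}$ --- which is why choosing the $U_x$ inside $U$ at the outset is essential and why no density-to-everything upgrade is ever needed. As written, your proposal omits the regularity step and the role of $G_2$, and these are not bookkeeping: without them the argument does not close.
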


\begin{proof}
Let $M$ be an elementary submodel of $H(\theta)$, where $\theta$ is a regular large enough cardinal, such that $X \in M$, $\mathfrak{c}+1 \subset M$, $M$ is closed under sequence of cardinality less than $\mathfrak{c}$ and $|M|=\mathfrak{c}$. The sequentiality of $X$ implies that $X \cap M$ is a closed subspace of $X$.

We claim that $X \subset M$. Suppose by contradiction that there is a point $p \in X \setminus M$. By regularity we can find an open set $U \subset X$ such that $X \cap M \subset U$ and $p \notin \overline{U}$. Use $\chi(X) \leq \mathfrak{c}$ and $\mathfrak{c}+1 \subset M$ to choose, for every $x \in X \cap M$, an open neighbourhood $U_x$ of $x$ such that $U_x \subset U$ and $U_x \in M$. Let $V=\bigcup \{U_x: x \in X \cap M \}$ and note that $X \cap M$ and $X \setminus V$ are disjoint closed sets, so there are disjoint open sets $G_1$ and $G_2$ such that $X \cap M \subset G_1$ and $X \setminus V \subset G_2$. Now $\{U_x: x \in X \cap M \} \cup \{G_2\}$ is an open cover of $X$ having cardinality continuum, so by Lemma $\ref{mainlem}$ there must be a set $C \subset X \cap M$ of cardinality less than continuum such that $X \subset \overline{\bigcup \{U_x: x \in X \cap M \}} \cup \overline{G_2}$. But $\overline{G_2} \cap X \cap M = \emptyset$, so we actually have $X \cap M \subset \overline{\bigcup \{U_x: x \in C \}}$. But the fact that $M$ is closed under $<\mathfrak{c}$-sequences implies that $C \in M$, so:

$$M \models X \subset \overline{\bigcup \{U_x: x \in C \}}$$

Therefore, by elementarity, we can say that:

$$H(\theta) \models X \subset \overline{\bigcup \{U_x: x \in C \}}$$

But the above formula contradicts the fact that $p \notin \overline{U}$.

So $X \subset M$, which implies $|X| \leq |M| \leq 2^{\aleph_0}$.
\end{proof}

The following corollary is a consequence of Theorem $\ref{MNtheorem}$ and gives another partial positive answer to Question 4 from \cite{BS}.

\begin{corollary} \label{MNcorollary}
Every monotonically normal cellular-Lindel\"of first-countable space has cardinality at most continuum.
\end{corollary}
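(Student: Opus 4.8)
The plan is to obtain this as an immediate consequence of Theorem \ref{MNtheorem} and Arhangel'skii's Theorem. First I would note that a first-countable monotonically normal cellular-Lindel\"of space $X$ meets the hypotheses of Theorem \ref{MNtheorem}, hence $X$ is Lindel\"of. Since $X$ is in addition first-countable (and Hausdorff, by our standing assumption), Arhangel'skii's Theorem --- quoted in the Introduction --- gives $|X| \le 2^{\aleph_0} = \mathfrak{c}$.

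I do not expect any genuine obstacle at this point: the substantive work is entirely contained in Theorem \ref{MNtheorem}, which upgrades the cellular-Lindel\"of property to Lindel\"ofness using the Balogh--Rudin characterization of paracompactness for monotonically normal spaces together with the observation (due to the authors of \cite{XS2}) that cellular-Lindel\"of spaces have countable extent. First-countability plays no role in that step; it enters only at the very end, to feed Lindel\"ofness into Arhangel'skii's cardinal bound.

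It is worth stressing that this is precisely where the monotone normality hypothesis earns its keep. Question \ref{mainquestion} --- whether $|X| \le \mathfrak{c}$ holds in ZFC for every first-countable cellular-Lindel\"of $X$ --- remains open, and Theorem \ref{thmineq} only settles it under the extra set-theoretic assumption $2^{<\mathfrak{c}}=\mathfrak{c}$ and the extra topological assumption of normality and sequentiality. Here we bypass those difficulties altogether by first pushing ``cellular-Lindel\"of'' all the way up to ``Lindel\"of'', which is possible because monotonically normal spaces are rigid enough for Theorem \ref{MNtheorem} to apply; the conclusion is then a ZFC statement with no additional hypotheses beyond first countability and monotone normality.
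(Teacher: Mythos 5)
Your proposal is correct and is exactly the paper's intended argument: the authors state the corollary as a consequence of Theorem \ref{MNtheorem}, i.e.\ first upgrade cellular-Lindel\"of to Lindel\"of via monotone normality, then apply Arhangel'skii's Theorem to the Lindel\"of first-countable (Hausdorff) space. Nothing further is needed.
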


\begin{corollary} \label{WLcorollary}
(CH) Every normal first-countable cellular-Lindel\"of space is weakly Lindel\"of.
\end{corollary}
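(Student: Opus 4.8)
The plan is to deduce Corollary \ref{WLcorollary} from Theorem \ref{thmineq} together with Lemma \ref{mainlem}. Under CH we have $2^{<\mathfrak{c}} = 2^{\aleph_0} = \mathfrak{c}$, so the hypothesis of Theorem \ref{thmineq} is satisfied. A first-countable space is sequential and has $\chi(X) \leq \aleph_0 \leq \mathfrak{c}$, so if $X$ is also normal and cellular-Lindel\"of, Theorem \ref{thmineq} applies and yields $|X| \leq \mathfrak{c}$.

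Now I would invoke Lemma \ref{mainlem}. Let $\mathcal{U}$ be an arbitrary open cover of $X$. Since $|X| \leq \mathfrak{c}$ and $X$ is first-countable, $X$ has a base of cardinality at most $\mathfrak{c}$, hence $w(X) \leq \mathfrak{c}$; in particular we may assume (by shrinking) that $|\mathcal{U}| \leq \mathfrak{c}$. If $|\mathcal{U}| < \mathfrak{c}$ then, under CH, $\mathcal{U}$ is countable and we are trivially done taking $\mathcal{V} = \mathcal{U}$. If $|\mathcal{U}| = \mathfrak{c}$, then Lemma \ref{mainlem} gives a subcollection $\mathcal{V} \subset \mathcal{U}$ with $|\mathcal{V}| < \mathfrak{c} = \aleph_1$, that is, $|\mathcal{V}| \leq \aleph_0$, such that $X \subset \overline{\bigcup \mathcal{V}}$. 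Either way we have produced a countable $\mathcal{V} \subset \mathcal{U}$ with dense union, which is exactly the weak Lindel\"of property.

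The only point requiring a little care is passing from an arbitrary cover $\mathcal{U}$ to one of size at most $\mathfrak{c}$: this is where first-countability and the cardinality bound $|X| \leq \mathfrak{c}$ combine to give $w(X) \leq \mathfrak{c}$, so that every open cover has a subcover (indeed a refinement drawn from a fixed base) of size $\leq \mathfrak{c}$, and it suffices to witness the weak Lindel\"of property for that subfamily. The genuinely substantive input is Theorem \ref{thmineq}; the present corollary is essentially an observation that, once the cardinality is pinned down at $\mathfrak{c}$, Lemma \ref{mainlem} upgrades automatically — under CH — to the countable density statement defining weak Lindel\"ofness. I do not anticipate a real obstacle beyond correctly chaining these two previously-established results.
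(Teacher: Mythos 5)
Your proposal is correct and follows essentially the same route as the paper: apply Theorem \ref{thmineq} (whose hypothesis $2^{<\mathfrak{c}}=\mathfrak{c}$ holds under CH) to get $|X|\leq\mathfrak{c}=\omega_1$, then reduce an arbitrary open cover to one of size at most $\mathfrak{c}$ and use the Lemma \ref{mainlem} argument, noting that under CH ``smaller than continuum'' means ``countable.'' The paper merely gestures at ``an argument similar to the one proving Lemma \ref{mainlem}''; you have correctly filled in that step.
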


\begin{proof}
Let $X$ be a normal first-countable cellular-Lindel\"of space. By Theorem $\ref{thmineq}$, the space $X$ has cardinality at most $\mathfrak{c}=\omega_1$, so an argument similar to the one proving Lemma $\ref{mainlem}$ shows that $X$ is weakly Lindel\"of.
\end{proof}

\section{Cellular-Lindel\"of spaces with $G_\delta$-diagonals}

Recall that a space $X$ is said to have a \emph{$G_\delta$-diagonal} iff its diagonal is a countable intersection of open sets in $X^2$. This is equivalent to the existence of a sequence of open covers $\{\mathcal{U}_n: n < \omega \}$ of the space $X$ such that $\bigcap \{St(x, \mathcal{U}_n): n < \omega \}=\{x\}$, for every $x \in X$.

While Lindel\"of spaces with a $G_\delta$-diagonal have cardinality at most continuum, there is no bound on the cardinality of cellular-Lindel\"of spaces with a $G_\delta$-diagonal. Indeed, Shakmatov \cite{Sh} and Uspenskii \cite{Us} constructed examples of arbitrarily large ccc spaces with a $G_\delta$-diagonal.

Some cardinality restrictions can be obtained by using certain strengthenings of the notion of a $G_\delta$-diagonal. A space $X$ has a \emph{$G_\delta$-diagonal of rank 2} if there exists a sequence $\{\mathcal{U}_n: n < \omega \}$ of open covers of $X$ such that $\bigcap \{St(St(x,\mathcal{U}_n), \mathcal{U}_n): n < \omega \}=\{x\}$ for every $x \in X$.

In \cite{XS2}, Xuan and Song proved that every cellular-Lindel\"of space with a $G_\delta$-diagonal of rank 2 has cardinality at most $2^{\mathfrak{c}}$. We prove that in the presence of normality the bound can be improved.

\begin{theorem} \label{rank2theorem}
Let $X$ be a normal cellular-Lindel\"of space $X$ with a $G_\delta$-diagonal of rank 2. Then $|X| \leq \mathfrak{c}$.
\end{theorem}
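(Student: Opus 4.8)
The plan is to run an elementary submodel argument in the spirit of the proof of Theorem~\ref{thmineq}, but with the rank~$2$ diagonal playing the combined role there played by sequentiality and by $\chi(X)\le\mathfrak c$, and with normality used in order to drop the hypothesis $2^{<\mathfrak c}=\mathfrak c$. Fix a regular cardinal $\theta$ large enough and a sequence $\langle\mathcal U_n:n<\omega\rangle$ of open covers of $X$ witnessing that the diagonal has rank~$2$, i.e.\ with $\bigcap_{n<\omega}St(St(x,\mathcal U_n),\mathcal U_n)=\{x\}$ for every $x\in X$. Choose an elementary submodel $M\prec H(\theta)$ with $X\in M$, $\langle\mathcal U_n:n<\omega\rangle\in M$, $\mathfrak c+1\subseteq M$, $|M|=\mathfrak c$ and $M$ $\omega$-closed; such an $M$ exists in ZFC because $\mathfrak c^\omega=\mathfrak c$ (build an increasing $\omega_1$-chain of elementary submodels of size $\mathfrak c$, containing $\mathfrak c+1$ and closed under the taking of countable subsets at successor stages). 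The goal is to prove $X\subseteq M$, which gives $|X|\le|M|=\mathfrak c$.

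The first step, and the one that genuinely requires rank~$2$ rather than a bare $G_\delta$-diagonal, is that $X\cap M$ is closed in $X$. Suppose $q\in\overline{X\cap M}$. For each $n$, since $St(q,\mathcal U_n)$ is a neighbourhood of $q$, we may pick $x_n\in(X\cap M)\cap St(q,\mathcal U_n)$. From $x_n\in St(q,\mathcal U_n)$ one gets a $U\in\mathcal U_n$ with $q,x_n\in U$, so that $q\in St(x_n,\mathcal U_n)$; and if $y\in St(x_n,\mathcal U_n)$, say $x_n,y\in V\in\mathcal U_n$, then $V$ meets $St(q,\mathcal U_n)$ (it contains $x_n$), so $y\in St(St(q,\mathcal U_n),\mathcal U_n)$. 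Hence $q\in\bigcap_{n<\omega}St(x_n,\mathcal U_n)\subseteq\bigcap_{n<\omega}St(St(q,\mathcal U_n),\mathcal U_n)=\{q\}$, that is, $\bigcap_{n<\omega}St(x_n,\mathcal U_n)=\{q\}$. Since $M$ is $\omega$-closed, $\langle x_n:n<\omega\rangle\in M$, so the singleton $\bigcap_{n<\omega}St(x_n,\mathcal U_n)$ is a member of $M$, whence $q\in M$.

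Now suppose towards a contradiction that $p\in X\setminus M$. Since $X\cap M$ is closed and $X$ is regular, fix an open $U$ with $X\cap M\subseteq U$ and $p\notin\overline U$. For $x\in X\cap M$, using $\bigcap_{n<\omega}St(St(x,\mathcal U_n),\mathcal U_n)=\{x\}$ let $n(x)$ be least with $p\notin St(St(x,\mathcal U_{n(x)}),\mathcal U_{n(x)})$, and put $W_x=St(x,\mathcal U_{n(x)})$; then $x\in W_x\in M$ (as $W_x$ is definable from $x$, $n(x)$ and $\mathcal U_{n(x)}$, all in $M$), and $St(p,\mathcal U_{n(x)})$ is an open neighbourhood of $p$ disjoint from $W_x$, so $p\notin\overline{W_x}$. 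Set $V=\bigcup\{W_x:x\in X\cap M\}$. The sets $X\cap M$ and $X\setminus V$ are disjoint and closed, so by normality there are disjoint open $G_1\supseteq X\cap M$ and $G_2\supseteq X\setminus V$. Then $\{W_x:x\in X\cap M\}\cup\{G_2\}$ is an open cover of $X$ of cardinality at most $\mathfrak c$, so Lemma~\ref{mainlem} gives $C\subseteq X\cap M$ with $|C|<\mathfrak c$ and $X\subseteq\overline{\bigcup\{W_x:x\in C\}}\cup\overline{G_2}$; since $G_1$ is open and disjoint from $G_2$ we have $\overline{G_2}\cap X\cap M=\emptyset$, hence $X\cap M\subseteq\overline{\bigcup\{W_x:x\in C\}}$.

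The last step is the delicate one, and it is where I expect the main obstacle. Splitting $C=\bigcup_{m<\omega}C_m$ with $C_m=\{x\in C:n(x)=m\}$ and using that a union of stars is the star of the union, one gets $\bigcup\{W_x:x\in C_m\}=St(C_m,\mathcal U_m)$ and therefore $p\notin St(St(C_m,\mathcal U_m),\mathcal U_m)\supseteq\overline{St(C_m,\mathcal U_m)}$ for each $m$. The trouble is that this separates $p$ only from each of countably many closed pieces, while a point can lie in the closure of a countable union of closed sets none of which contains it; so one cannot immediately conclude $p\notin\overline{\bigcup\{W_x:x\in C\}}$, which is what would contradict $X\cap M\subseteq\overline{\bigcup\{W_x:x\in C\}}$. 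The way I would attack this is to bring in normality and the cellular-Lindel\"of property (via Lemma~\ref{mainlem}) a second time, working one piece $C_m$ at a time, so as to shrink the star-neighbourhoods used there to sit inside the set $U$ fixed above; this would force $\bigcup\{W_x:x\in C\}\subseteq\overline U$, contradicting $p\notin\overline U$. Making this shrinking step work — turning the countably many local separations into one global separation inside $U$ — is the heart of the argument and the point I would expect to spend the most effort on.
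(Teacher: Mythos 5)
Your proposal is not a proof: you yourself flag the ``shrinking step'' as unresolved, and that step is not a technicality --- it is the whole difficulty. Knowing $p\notin\overline{St(C_m,\mathcal U_m)}$ for each of countably many $m$ genuinely does not give $p\notin\overline{\bigcup_m St(C_m,\mathcal U_m)}$, and no mechanism is offered to fix this. Worse, even if that step were repaired, the endgame does not close. From Lemma~\ref{mainlem} you only obtain $X\cap M\subseteq\overline{\bigcup\{W_x:x\in C\}}$, and since $p\notin X\cap M$, showing $p\notin\overline{\bigcup\{W_x:x\in C\}}$ (or $\bigcup\{W_x:x\in C\}\subseteq\overline U$) contradicts nothing. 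In Theorem~\ref{thmineq} the contradiction is reached only by reflecting the statement ``$X\subseteq\overline{\bigcup\{U_x:x\in C\}}$'' through $M$ to the real $X$, and that requires $C\in M$ together with the family $\{U_x:x\in C\}\in M$; there the model is $<\mathfrak c$-closed, which is exactly why the hypothesis $2^{<\mathfrak c}=\mathfrak c$ appears. Your model is only $\omega$-closed (all that ZFC provides at size $\mathfrak c$), while $|C|$ may be any cardinal below $\mathfrak c$, so $C\in M$ fails and the reflection step is unavailable. In short, the elementary submodel route does not drop the hypothesis $2^{<\mathfrak c}=\mathfrak c$ just by trading $\chi(X)\le\mathfrak c$ for the rank $2$ diagonal; your correct first step (that $X\cap M$ is closed) is the only place the rank $2$ hypothesis actually gets used, and it is not where the bottleneck lies.

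The paper's proof is entirely different and much shorter: it is a partition argument, not a closing-off argument. Assume $|X|>\mathfrak c$ and colour $[X]^2$ by setting $F_n=\{\{x,y\}:St(x,\mathcal U_n)\cap St(y,\mathcal U_n)=\emptyset\}$; the rank $2$ property says exactly that $[X]^2=\bigcup_n F_n$. Erd\H{o}s--Rado then yields an uncountable $S$ and an $n_0$ with $[S]^2\subseteq F_{n_0}$. Such an $S$ is closed and discrete (each member of $\mathcal U_{n_0}$ meets at most one point of $S$), so normality gives an open $V$ with $S\subseteq V\subseteq\overline V\subseteq\bigcup\{St(x,\mathcal U_{n_0}):x\in S\}$, and $\{St(x,\mathcal U_{n_0})\cap V:x\in S\}$ is an uncountable cellular family whose trace on the Lindel\"of set $L$ provided by cellular-Lindel\"ofness is an uncountable \emph{discrete} family of nonempty open subsets of $L$ --- impossible in a Lindel\"of space. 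If you want to salvage your approach you would have to either find a ZFC construction of a $<\mathfrak c$-closed model of size $\mathfrak c$ (which does not exist in general) or replace the reflection step by a direct argument; as written, the proposal has a genuine gap.
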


\begin{proof}
Let $\{\mathcal{U}_n: n < \omega \}$ be a sequence of open covers witnessing that $X$ has a $G_\delta$-diagonal of rank 2 and suppose by contradiction that $|X|> \mathfrak{c}$. Set $F_n=\{\{x,y\} \in [X]^2: St(x, \mathcal{U}_n) \cap St(y, \mathcal{U}_n)=\emptyset \}$. Since $[X]^2=\bigcup \{F_n: n < \omega \}$, by the Erd\H{o}s-Rado theorem there is an uncountable set $S \subset X$ and an integer $n_0$ such that $[S]^2 \subset F_{n_0}$. Since $\mathcal{U}_{n_0}$ is an open cover of $X$, the set $S$ is closed. Therefore, we may pick an open set $V$ such that $S \subset V \subset \overline{V} \subset \bigcup \{St(x, \mathcal{U}_{n_0}): x \in S \}$. The family $\mathcal{U}=\{St(x, \mathcal{U}_{n_0}) \cap V: x \in S \}$ consists of pairwise disjoint non-empty open sets and thus there is a Lindel\"of subspace $L$ of $X$ which meets every member of $\mathcal{U}$. But then $\{U \cap L: U \in \mathcal{U}\}$ is an uncountable discrete family in $L$ and that is a contradiction.
\end{proof} 

One of the referees noted that the above theorem also follows from the main result of \cite{XS0}.

The following theorem answers Question 5.5 from \cite{XS2}.

\begin{theorem}
Every cellular-Lindel\"of space with a regular $G_\delta$-diagonal has cardinality bounded by $2^{\mathfrak{c}}$.
\end{theorem}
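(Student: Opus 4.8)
The plan is to reduce everything to a cellularity estimate, in exactly the way one proves that cellular-Lindel\"of first-countable spaces have cardinality at most $2^{\mathfrak{c}}$: there one first checks that $c(X)\le \mathfrak{c}$ and then applies the Hajnal-Juh\'asz inequality $|X|\le 2^{c(X)\chi(X)}$. Here the role of Hajnal-Juh\'asz will be played by the bound $|X|\le 2^{c(X)}$, valid for every space with a \emph{regular} $G_\delta$-diagonal; this is the natural cellularity version of R. Buzyakova's theorem that ccc spaces with a regular $G_\delta$-diagonal have cardinality at most $\mathfrak{c}$. So the two ingredients are: (1) cellular-Lindel\"of $+$ $G_\delta$-diagonal $\Rightarrow c(X)\le\mathfrak{c}$; (2) regular $G_\delta$-diagonal $\Rightarrow |X|\le 2^{c(X)}$.

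For (1): fix a cellular family and enlarge it to a maximal one $\mathcal{U}$. By cellular-Lindel\"ofness there is a Lindel\"of $L\subseteq X$ meeting every member of $\mathcal{U}$. Since a $G_\delta$-diagonal is hereditary, $L$ is a Lindel\"of space with a $G_\delta$-diagonal, so $|L|\le\mathfrak{c}$ by the classical bound (replace the open covers witnessing the diagonal by countable subcovers, which still witness it, and observe that a point of $L$ is determined by the trace on each of these countable covers of the family of its neighbourhoods). As the members of $\mathcal{U}$ are pairwise disjoint and each meets $L$, $|\mathcal{U}|\le|L|\le\mathfrak{c}$; since every cellular family extends to a maximal one, $c(X)\le\mathfrak{c}$. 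Combined with (2) this gives $|X|\le 2^{c(X)}\le 2^{\mathfrak{c}}$.

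For (2), which is the heart of the matter: the case $c(X)=\aleph_0$ is Buzyakova's theorem, and the general bound comes out of the same method. Fix open covers $\{\mathcal{U}_n:n<\omega\}$ with $\bigcap_n\overline{St(x,\mathcal{U}_n)}=\{x\}$ for every $x$, and, after replacing $\mathcal{U}_n$ by the common refinement of $\mathcal{U}_0,\dots,\mathcal{U}_n$, assume the closed stars $\overline{St(x,\mathcal{U}_n)}$ decrease with $n$. For each $n$ pick a maximal cellular family $\mathcal{V}_n$ refining $\mathcal{U}_n$, so $|\mathcal{V}_n|\le c(X)$ and $\bigcup\mathcal{V}_n$ is dense. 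One then runs a Hajnal-Juh\'asz-style closing-off: build an increasing, continuous chain $\langle A_\alpha:\alpha<c(X)^+\rangle$ of subsets of $X$ of size at most $2^{c(X)}$, at each successor stage adjoining, for every subfamily $\mathcal{W}$ of $\bigcup_n\mathcal{V}_n$ with $X\setminus\overline{\bigcup\mathcal{W}}\ne\emptyset$, a point of that open set. There are at most $2^{c(X)}$ such $\mathcal{W}$ (because $|\bigcup_n\mathcal{V}_n|\le c(X)$), so the chain stays of size $\le 2^{c(X)}$; using the regularity of the $G_\delta$-diagonal one checks that $A=\bigcup_\alpha A_\alpha$ is all of $X$, by showing that a point $p\notin A$ together with the cells of the $\mathcal{V}_n$ lying near the points of a closed neighbourhood-complement of $p$ produces a family $\mathcal{W}$ whose closure contains that complement but not $p$, contradicting the construction.

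The main obstacle is precisely this last verification. One must arrange things so that ``crossing a cell'' of a cellular refinement of $\mathcal{U}_n$ does not consume more iterated-star operations than the regular $G_\delta$-diagonal controls through the closures $\overline{St(x,\mathcal{U}_n)}$; this is exactly the point at which the plain $G_\delta$-diagonal is insufficient — recall the ccc examples of Shakhmatov and Uspenskii of cardinality greater than $\mathfrak{c}$ with a $G_\delta$-diagonal — and it is also why a recursion, rather than a one-shot encoding of points by their interaction with a single cellular refinement, is required (such an encoding is not injective). If one prefers to keep the argument short, step (2) may simply be quoted in its general form, only step (1) being specific to cellular-Lindel\"of spaces.
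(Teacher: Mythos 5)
Your proposal is correct and follows essentially the same route as the paper: first bound the cellularity by $\mathfrak{c}$ using the fact that a Lindel\"of subspace meeting every cell has a $G_\delta$-diagonal and hence cardinality at most $\mathfrak{c}$, then invoke the bound $|X|\le 2^{c(X)}$ for spaces with a regular $G_\delta$-diagonal (Gotchev's generalization of Buzyakova's theorem, which the paper simply cites, as you suggest doing at the end rather than relying on your only partially verified closing-off sketch).
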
 

\begin{proof}
Every Lindel\"of space with a $G_\delta$-diagonal has cardinality at most continuum, and hence every cellular-Lindel\"of space with a $G_\delta$-diagonal has cellularity at most continuum. Now, a space with cellularity at most continuum and a regular $G_\delta$-diagonal has cardinality at most $2^{\mathfrak{c}}$ by Theorem 4.2 of \cite{Go} (see also \cite{B}).
\end{proof}

\section{Open Problems}

The topic of cellular-Lindel\"of spaces is still in its infancy and several intriguing questions remain open about them. Besides Question 1, these are the main ones:

\begin{question}
Let $X$ be a cellular-Lindel\"of first-countable regular space. Is $|X| \leq \mathfrak{c}$?
\end{question}

Theorem $\ref{MNtheorem}$ and Corollary $\ref{MNcorollary}$ give partial positive answers to the above question.

\begin{question}
Let $X$ be a cellular-Lindel\"of first-countable normal space. Is $|X| \leq \mathfrak{c}$ in ZFC?
\end{question}

Theorem $\ref{MNtheorem}$ shows that the above question has a positive answer under $2^{<\mathfrak{c}}=\mathfrak{c}$.

\begin{question}
Is there a cellular-Lindel\"of non-weakly Lindel\"of (regular) space?
\end{question}

\begin{question}
Is every normal cellular-Lindel\"of first-countable space weakly Lindel\"of in ZFC?
\end{question}

Corollary $\ref{WLcorollary}$ shows that the above question has a positive answer under CH. 

If the following question had a positive answer, then, in view of Corollary $\ref{WLcorollary}$, the cellular-Lindel\"of and the weak Lindel\"of property would be equivalent for the class of normal first-countable spaces under CH.

\begin{question}
Is every normal first-countable weakly Lindel\"of space cellular-Lindel\"of under CH?
\end{question}

\begin{question}
\cite{XS2} Is there a normal weakly Lindel\"of non-cellular-Lindel\"of space?
\end{question}

\begin{question}
Let $X$ be a cellular-Lindel\"of regular space with a $G_\delta$-diagonal of rank 2. Is $|X| \leq \mathfrak{c}$?
\end{question}

By Theorem $\ref{rank2theorem}$ the above question has a positive answer for the class of normal spaces.

\begin{question}
Let $X$ be a cellular-Lindel\"of space with a regular $G_\delta$-diagonal. Is $|X| \leq \mathfrak{c}$?
\end{question}

\end{document}